\author{Tuomas Orponen}
\title[Ahlfors-David regular weighted bounds for the extension opertor]{On Ahlfors-David regular weighted bounds for the extension operator associated to the circle}
\address{School of Mathematics, University of Edinburgh}
\subjclass[2010]{35S30 (Primary); 42B99 (Secondary).}
\thanks{T.O. gratefully acknowledges the financial support of the Finnish foundation Jenny and Antti Wihurin Rahasto.}
\email{tuomas.orponen@helsinki.fi}
\newcommand{\R}{\mathbb{R}}
\newcommand{\N}{\mathbb{N}}
\newcommand{\Z}{\mathbb{Z}}
\newcommand{\calD}{\mathcal{D}}
\newcommand{\spt}{\operatorname{spt}}
\newcommand{\calE}{\mathcal{E}}
\newcommand{\diam}{\operatorname{diam}}
\newcommand{\sgn}{\operatorname{sgn}}
\newcommand{\Rea}{\operatorname{Re}}
\newcommand{\Ai}{\operatorname{Ai}}
\newcommand{\Bi}{\operatorname{Bi}}
\numberwithin{equation}{section}
\theoremstyle{plain}
\newtheorem{thm}[equation]{Theorem}
\newtheorem{lemma}[equation]{Lemma}
\newtheorem{proposition}[equation]{Proposition}
\theoremstyle{definition}
\theoremstyle{remark}
\begin{document}

\maketitle

\begin{abstract} This paper addresses the sharpness of a weighted $L^{2}$-estimate for the Fourier extension operator associated to the circle, obtained by J. Bennett, A. Carbery, F. Soria and A. Vargas in 2006. A point left open in their paper was the necessity of a certain $\log R$-factor in the bound. Here, I show that the factor is necessary for all $1/2$-Ahlfors-David regular weights on the circle, but it can be removed for $s$-Ahlfors-David regular weights with $s \neq 1/2$.
 
\end{abstract}

\section{Introduction} This paper is concerned with the Fourier extension operator $g \mapsto \widehat{g d\sigma}$ associated to the circle $S^{1} \subset \R^{2}$, defined for all $g \in L^{1}(S^{1})$ by
\begin{displaymath} \widehat{gd\sigma}(x) = \int_{S^{1}} e^{-2\pi i x \cdot \xi} g(\xi) \, d\sigma(\xi). \end{displaymath} 
Here $\sigma$ is the length measure on $S^{1}$. The following weighted inequality for $\widehat{\cdot d\sigma}$ was established by J. Bennett, A. Carbery, F. Soria and A. Vargas \cite{BCSV} in 2006:
\begin{equation}\label{bcsv} \int_{S^{1}} |\widehat{gd\sigma}(Rx)|^{2} \, d\mu x \lesssim \frac{\log R}{R}\|g\|_{2}^{2}\sup_{R^{-1} \leq \alpha \leq R^{-2/3}} \frac{\mu(T(\alpha,\alpha^{2}R))}{\alpha}. \end{equation}
By assumption, the measure $\mu$ is supported on $S^{1}$, and the notation $T(\alpha,\beta)$ stands for an arbitrary rectangle with dimensions $\alpha \times \beta$; so, the $\sup$ is taken over all the scales $R^{-1} \leq \alpha \leq R^{-2/3}$ and over all $(\alpha \times \alpha^{2}R)$-rectangles in $\R^{2}$, with any orientation. The quantity $\|g\|_{2}^{2}$ appearing on the right hand side of \eqref{bcsv} is the square of the \textbf{unweighted} $L^{2}$-norm of $g$ on $S^{1}$. As usual, the notation $A \lesssim B$ means that $A \leq CB$ for some absolute constant $C \geq 1$.

One motivation to study inequalities of the form \eqref{bcsv} stems from fact (see \cite[Proposition 1]{BCSV} or \cite{CSV}) that the best constant $A$ in the inequality
\begin{displaymath} \int_{\{|x| \leq 1\}} |\widehat{gd\sigma}(Rx)|^{2} \, d\mu x \lesssim A \cdot \frac{\|g\|_{2}^{2}}{R} \end{displaymath}
is comparable with the best constant $B$ in the inequality
\begin{equation}\label{multiplier} \int_{\{|x| \leq 1\}} |S^{1/R}f(x)|^{2} \, d\mu x \leq B \int_{\R^{2}} |f(x)|^{2} \, dx, \end{equation}
where $S^{1/R}$ is the Fourier multiplier in $\R^{2}$ with symbol in $\Phi(|\xi| - R)$, and $\Phi$ is a non-negative normalised smooth bump function in one variable.

The necessity of the $\log R$-factor in \eqref{bcsv} was left open in \cite{BCSV}, and the primary purpose of the present paper is to address this issue. By definition, a measure $\mu$ on $\R^{2}$ is \emph{$s$-Ahlfors-David regular}, if
\begin{equation}\label{ahlforsDavid} \mu(B(x,r)) \sim r^{s}, \qquad x \in \spt \mu, \: 0 < r \leq \diam(\spt \mu), \end{equation}
where $A \sim B$ is shorthand for $A \lesssim B \lesssim A$. The constant
\begin{displaymath} M_{R}(\mu) := \sup_{R^{-1} \leq \alpha \leq R^{-2/3}} \frac{\mu(T(\alpha,\alpha^{2}R))}{\alpha} \end{displaymath}
is readily computed for $s$-Ahlfors-David regular measures supported on $S^{1}$. Here are the numbers:
\begin{proposition}\label{constants} Let $\mu$ be an $s$-Ahlfors-David regular measure supported on $S^{1}$. Then,
\begin{displaymath} M_{R}(\mu) \sim \begin{cases} R^{(2 - s)/3}, & \text{if } 1/2 \leq s \leq 1,\\ R^{1 - s}, & \text{if } 0 \leq s \leq 1/2. \end{cases} \end{displaymath}
\end{proposition}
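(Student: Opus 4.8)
The plan is to reduce everything to two facts about an $(\alpha\times\alpha^{2}R)$-rectangle $T$ in the admissible range $R^{-1}\le\alpha\le R^{-2/3}$: that $\diam T\sim\alpha^{2}R$, and that $T$, when placed tangent to $S^{1}$, can be made to contain an arc of length $\sim\alpha^{2}R$. Since $R^{-1}\le\alpha\le R^{-2/3}$ forces $\alpha\le\alpha^{2}R\le R^{-1/3}$, such a rectangle has short side $\alpha$, long side $\alpha^{2}R$, and $\diam T=\sqrt{\alpha^{2}+\alpha^{4}R^{2}}\sim\alpha^{2}R$; moreover all the relevant radii lie below $\diam(\spt\mu)$ once $R$ is large, so the Ahlfors--David estimate \eqref{ahlforsDavid} may be applied at them (the case of bounded $R$ being trivial, as all quantities are then comparable to $1$, with implicit constants depending only on the regularity constants of $\mu$).

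For the upper bound on $M_{R}(\mu)$, fix an admissible $\alpha$ and an $(\alpha\times\alpha^{2}R)$-rectangle $T$. If $T\cap\spt\mu=\emptyset$ then $\mu(T)=0$; otherwise pick $p\in T\cap\spt\mu$, note that $T\subseteq B(p,\diam T)\subseteq B(p,C\alpha^{2}R)$, and conclude $\mu(T)\le\mu(B(p,C\alpha^{2}R))\lesssim(\alpha^{2}R)^{s}$ by \eqref{ahlforsDavid}. (Note that this step does not use the geometry of $S^{1}$ beyond the diameter bound, so it does not matter whether $T$ is tangent to or crosses $S^{1}$.) Hence $\mu(T)/\alpha\lesssim\alpha^{2s-1}R^{s}$ for every admissible $\alpha$ and every such $T$, and taking suprema gives $M_{R}(\mu)\lesssim\sup_{R^{-1}\le\alpha\le R^{-2/3}}\alpha^{2s-1}R^{s}$.

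For the matching lower bound I would exhibit, for each admissible $\alpha$, one good rectangle. Choose any $x_{0}\in\spt\mu$ and let $T$ be the $(\alpha\times\alpha^{2}R)$-rectangle centred at $x_{0}$ with long side parallel to the tangent line of $S^{1}$ at $x_{0}$. Parametrising $S^{1}$ near $x_{0}$ by arclength $t$, the point at parameter $t$ lies within $|t|$ of $x_{0}$ in the tangential direction and within $1-\cos t\le t^{2}/2$ of it in the normal direction, so the sub-arc $\gamma=\{|t|\le c\alpha^{2}R\}$ has length $\sim\alpha^{2}R$, tangential extent $\lesssim c\alpha^{2}R\le\alpha^{2}R$, and normal extent $\le\tfrac12(c\alpha^{2}R)^{2}=\tfrac12 c^{2}\alpha^{4}R^{2}\le\tfrac12 c^{2}\alpha\le\alpha$, the key inequality $\alpha^{4}R^{2}\le\alpha$ being exactly $\alpha^{3}R^{2}\le1$, i.e. $\alpha\le R^{-2/3}$. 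For a small absolute constant $c$ this puts $\gamma\subseteq T$, and since $\gamma\supseteq B(x_{0},c'\alpha^{2}R)\cap S^{1}$ for a suitable $c'\sim c$, we get $\mu(T)\ge\mu(B(x_{0},c'\alpha^{2}R))\gtrsim(\alpha^{2}R)^{s}$ by \eqref{ahlforsDavid}. Thus $M_{R}(\mu)\ge\mu(T)/\alpha\gtrsim\alpha^{2s-1}R^{s}$ for every admissible $\alpha$, and together with the previous paragraph $M_{R}(\mu)\sim\sup_{R^{-1}\le\alpha\le R^{-2/3}}\alpha^{2s-1}R^{s}$.

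It remains only to evaluate the supremum, which is elementary: $\alpha\mapsto\alpha^{2s-1}$ is nondecreasing on $(0,\infty)$ when $s\ge1/2$ and nonincreasing when $s\le1/2$. Hence for $1/2\le s\le1$ the supremum is attained at $\alpha=R^{-2/3}$, giving $R^{-(2s-1)\cdot2/3}\cdot R^{s}=R^{(2-s)/3}$; for $0\le s\le1/2$ it is attained at $\alpha=R^{-1}$, giving $R^{-(2s-1)}\cdot R^{s}=R^{1-s}$; and the two expressions agree at $s=1/2$ (both equal $R^{1/2}$), so the case distinction is consistent. I expect the only genuinely delicate point to be the curvature bookkeeping in the lower bound, namely matching the constant $c$ to the precise endpoint $\alpha\le R^{-2/3}$ so that a length-$(\alpha^{2}R)$ arc really does stay inside the width $\alpha$; everything else is immediate from Ahlfors--David regularity together with $\diam T\sim\alpha^{2}R$.
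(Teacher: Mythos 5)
Your proof is correct and follows essentially the same route as the paper's: the upper bound via $T\subseteq B(p,C\alpha^{2}R)$ for a point $p$ of the support, the lower bound via a tangent rectangle containing an arc of length $\sim\alpha^{2}R$, and then the elementary maximisation of $\alpha^{2s-1}R^{s}$ over the admissible range. You merely spell out the curvature bookkeeping (the inequality $\alpha^{4}R^{2}\le\alpha\iff\alpha\le R^{-2/3}$) that the paper leaves as a remark, and correctly anchor the rectangle at a point of $\spt\mu$ so that Ahlfors--David regularity is actually applicable.
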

The point to observe is that the number $s = 1/2$ has a special role. Now, consider the quantity
\begin{displaymath} \calE(\mu,R) := \sup_{\|g\|_{2} = 1} R \cdot \int_{S^{1}} |\widehat{g d\sigma}(Rx)|^{2} \, d\mu x. \end{displaymath}
Inequality \eqref{bcsv} can be restated as $\calE(\mu,R) \lesssim M_{R}(\mu) \cdot \log R$. The main result of the paper, below, shows that the $(\log R)$-factor can be dispensed with for all $s$-Ahlfors-David regular measures with $s \neq 1/2$, but, on the other hand, the factor is necessary for \textbf{all} $1/2$-Ahlfors-David regular measures:
\begin{thm}\label{main} Let $0 \leq s \leq 1$, and let $\mu$ be an $s$-Ahlfors-David regular measure on $S^{1}$. Then, there exist arbitrarily large $R \geq 1$ such that
\begin{displaymath} \calE(\mu,R) \sim \begin{cases} M_{R}(\mu), & \text{if } s \neq \tfrac{1}{2},\\ M_{R}(\mu) \cdot \log R, & \text{if } s = \tfrac{1}{2}. \end{cases} \end{displaymath}
\end{thm}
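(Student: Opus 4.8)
\emph{Overview.} The estimate \eqref{bcsv} already gives the upper bound $\calE(\mu,R) \lesssim M_R(\mu)\log R$, and this is the only upper bound we need in the case $s = \tfrac12$; so the task splits into three pieces: (a) the matching lower bound $\calE(\mu,R) \gtrsim M_R(\mu)$, valid for every $s$ and every $R$; (b) removing the $\log R$ from \eqref{bcsv} when $s \neq \tfrac12$; and (c) the stronger lower bound $\calE(\mu,R) \gtrsim M_R(\mu)\log R$ when $s = \tfrac12$, along a suitable sequence $R \to \infty$. It is convenient to keep in mind that $\calE(\mu,R) = R\,\|T_R\|_{L^2(\sigma)\to L^2(\sigma)}$, where $T_R$ is the positive operator with kernel $\widehat{\mu}(R(\eta-\eta'))$; thus a saturating $g$ is essentially a top ``eigenfunction'' of $T_R$.

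\emph{Step 1: $\calE(\mu,R) \gtrsim M_R(\mu)$.} Fix $p \in \spt\mu$, let $v$ be the tangent direction of $S^1$ at $p$, let $\theta_\delta \subset S^1$ be the arc of length $\delta$ centred at $v$, and set $g_\delta(\eta) = e^{2\pi i R p\cdot\eta}\mathbf{1}_{\theta_\delta}(\eta)$. The modulation moves the large set of $\widehat{g_\delta d\sigma}$ onto $p$: since $\widehat{g_\delta d\sigma}(Rx) = \widehat{\mathbf{1}_{\theta_\delta}d\sigma}(R(x-p))$, a Knapp/stationary-phase computation gives $|\widehat{g_\delta d\sigma}(Rx)| \gtrsim \delta$ for every $x \in S^1$ with $\dist(x,p)$ at most a small multiple of $\min\{R^{-1}\delta^{-2},\delta\}$ (the three competing constraints being the long and short sides of the dual plank and the angular aperture of $\theta_\delta$). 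By the Ahlfors--David lower bound $\mu(B(p,r)) \gtrsim r^s$ this region carries $\mu$-mass $\gtrsim (\min\{R^{-1}\delta^{-2},\delta\})^s$, and $\|g_\delta\|_2^2 = \sigma(\theta_\delta) \sim \delta$, so $\calE(\mu,R) \gtrsim R\,\delta\,(\min\{R^{-1}\delta^{-2},\delta\})^s$. Taking $\delta = R^{-1/3}$ yields $\gtrsim R^{(2-s)/3}$ for $\tfrac12 \le s \le 1$, and $\delta \sim 1$ yields $\gtrsim R^{1-s}$ for $0 \le s \le \tfrac12$; by Proposition~\ref{constants} these are exactly $M_R(\mu)$, for every $R \ge 1$.

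\emph{Step 2: removing $\log R$ when $s \neq \tfrac12$.} One revisits the proof of \eqref{bcsv}. Decomposing $g = \sum_\alpha g_\alpha$ into angular pieces adapted to the dyadic scales $\alpha \in [R^{-1},R^{-2/3}]$, the arguments of \cite{BCSV} (applied to a single angular scale, where no logarithm is lost) give $\int_{S^1}|\widehat{g_\alpha d\sigma}(Rx)|^2\,d\mu x \lesssim R^{-1}\alpha^{-1}\mu(T_\alpha)\,\|g_\alpha\|_2^2$ for a suitable $(\alpha\times\alpha^2 R)$-rectangle $T_\alpha$. Expanding $\int_{S^1}|\sum_\alpha \widehat{g_\alpha d\sigma}(Rx)|^2\,d\mu x = \sum_{\alpha,\beta}\langle \widehat{g_\alpha d\sigma}(R\cdot),\widehat{g_\beta d\sigma}(R\cdot)\rangle_{L^2(\mu)}$ and applying Cauchy--Schwarz \emph{to each pair} $(\alpha,\beta)$ — rather than bounding the diagonal by $\log R$ times its maximum — together with the orthogonality $\sum_\alpha\|g_\alpha\|_2^2 \sim \|g\|_2^2$, yields the refinement $\calE(\mu,R) \lesssim \sum_{R^{-1}\le\alpha\le R^{-2/3}}\mu(T_\alpha)/\alpha$; replacing each summand by $M_R(\mu)$ recovers \eqref{bcsv}. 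Now the computation behind Proposition~\ref{constants} shows $\sup_{T(\alpha,\alpha^2R)}\mu(T) \sim (\alpha^2 R)^s$ for $\alpha \le R^{-2/3}$, so $\mu(T_\alpha)/\alpha \sim \alpha^{2s-1}R^s$. For $s \neq \tfrac12$ this is a genuine geometric progression in $\alpha$, dominated by one endpoint, hence $\sum_\alpha \mu(T_\alpha)/\alpha \lesssim \max_\alpha \mu(T_\alpha)/\alpha \sim M_R(\mu)$; with Step 1 this gives $\calE(\mu,R) \sim M_R(\mu)$ for \emph{all} large $R$. For $s = \tfrac12$ every summand equals $R^{1/2} \sim M_R(\mu)$ and the sum is honestly $\sim M_R(\mu)\log R$ — this is the arithmetic origin of the dichotomy.

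\emph{Step 3: $\calE(\mu,R) \gtrsim M_R(\mu)\log R$ when $s = \tfrac12$.} This is the crux. Tracking the equality cases in Step 2, a $g$ saturating $\calE(\mu,R) \sim \sum_\alpha\mu(T_\alpha)/\alpha$ must have the $\sim\log R$ pieces $g_\alpha$ of comparable $L^2$-norm (forced by Cauchy--Schwarz together with $\mu(T_\alpha)/\alpha \sim R^{1/2}$ being scale-independent), and must have the extensions $\widehat{g_\alpha d\sigma}(R\cdot)$ \emph{pairwise in phase} on $\spt\mu$, so that every cross term $\langle \widehat{g_\alpha d\sigma}(R\cdot),\widehat{g_\beta d\sigma}(R\cdot)\rangle_{L^2(\mu)}$ is positive and of the maximal size $\sim R^{-1/2}\|g_\alpha\|_2\|g_\beta\|_2$. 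I would construct such a $g$ as a coherent superposition, over all $\sim\log R$ dyadic scales, of modulated Knapp caps whose modulations and arcs are chosen so that the pieces focus on a common, nested family of arcs of $\spt\mu$. The obstruction — and the reason the conclusion is only along \emph{arbitrarily large} $R$, not all $R$ — is that the required simultaneous phase alignment on $\spt\mu$ is a resonance condition on $R$ relative to the fine structure of $\mu$: for a generic caps-at-one-point attempt the cross terms decay like $2^{-\max}$ in the scale indices and the logarithm is swallowed. Using that $\mu$ is $\tfrac12$-Ahlfors--David regular, a pigeonholing/renormalisation argument (blowing up a microscopic piece of $\mu$ around a well-chosen $p \in \spt\mu$ at a well-chosen scale $\rho$, and taking $R \sim \rho^{-1} \to \infty$) produces, along a sequence $R \to \infty$, a configuration on which the phase alignment holds up to $O(1)$; the $\tfrac12$-regularity is precisely what makes the $\sim\log R$ scales contribute equally both to $\|g\|_2^2$ and to the positive off-diagonal sum, so that their ratio produces the extra $\log R$. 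Verifying positivity of the cross terms on a $\mu$-large set, controlling the tails of the caps' extensions, and carrying out the renormalisation cleanly is the main technical work, and I expect it to be the principal difficulty in the whole proof.
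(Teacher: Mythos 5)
Your Steps 1 and 2 are correct and essentially equivalent to the paper's arguments. Step 1 gives $\calE(\mu,R)\gtrsim M_{R}(\mu)$ by a modulated Knapp example; for $s>1/2$ this is exactly what the paper does, and for $s<1/2$ your choice $\delta\sim 1$ is actually simpler than the paper's route (which recycles the $s=1/2$ construction). Step 2, summing the single-scale estimates and observing that $\alpha\mapsto\mu(T_{\alpha})/\alpha\sim\alpha^{2s-1}R^{s}$ is a geometric progression precisely when $s\neq\tfrac{1}{2}$, is the same mechanism as the paper's weighted Cauchy--Schwarz with factors $2^{\pm\beta p}$; both correctly locate $s=\tfrac{1}{2}$ as the exponent where all $\sim\log R$ scales contribute equally.

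Step 3, however, is the main content of the theorem, and there your proposal is a plan rather than a proof, built on a mechanism that is not the one that works. You correctly identify what a saturating $g$ must look like (about $\log R$ scales of comparable $L^{2}$-mass whose extensions are simultaneously in phase on $\spt\mu$), but you defer the construction to a ``resonance/renormalisation'' argument in which $R$ is tuned to the fine structure of $\mu$, without indicating how the phase alignment would actually be verified. The paper's solution needs two ideas absent from your sketch. First, the alignment on $\spt\mu$ is obtained for free by building $g$ out of the Littlewood--Paley pieces of $\mu$ itself: the coefficients of $g$ near frequency $R$ are taken proportional to $\widehat{\Delta_{p}(\mu)}(k-R)$ with $\Delta_{p}(\mu)=\mu\ast(\eta_{4^{-p-K}}-\eta_{4^{-p}})$, and the key point is the pointwise lower bound $\Delta_{p}(\mu)(t)\gtrsim R^{1/6}2^{p}$ for all $t\in\spt\mu$, see \eqref{form2}; this makes every cross term positive and of full size on the support of an \emph{arbitrary} $\tfrac{1}{2}$-regular $\mu$, with no resonance condition on $R$ whatsoever. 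Second, the remaining phases come from the Bessel factors in $\widehat{gd\sigma}(re^{2\pi i\theta})=\sum_{k}a_{k}J_{k}(r)e^{2\pi ik\theta}$; these are tamed by inserting $\sgn J_{k}(r)$ into the coefficients and averaging over $r\in(R-R^{1/3},R+R^{1/3})$, which requires the quantitative lower bound $\frac{1}{R^{1/3}}\int_{R-R^{1/3}}^{R+R^{1/3}}|J_{k}(r)|\,dr\gtrsim 2^{-p/2}R^{-1/3}$ of Lemma \ref{average}, proved via Airy-function asymptotics. This averaging over radii, not a resonance with $\mu$, is the source of the ``arbitrarily large $R$'' in the statement. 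Without these ingredients (or substitutes for them) Step 3 does not close, so the lower bound in the case $s=\tfrac{1}{2}$ remains unproved.
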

Even if there is a slight improvement over \eqref{bcsv} for the upper bound in the case $s \neq \tfrac{1}{2}$, the proof is still very much the same as in \cite{BCSV}; so, this part of the result is included mainly to demonstrate the special role of $s = 1/2$ (nevertheless, a proof is included in Section \ref{sNeqHalf}). The upper bound in the case $s = \tfrac{1}{2}$ is precisely \eqref{bcsv}, so the main point of the whole paper is to establish the lower bound 
\begin{displaymath} \calE(\mu,R) \gtrsim M_{R}(\mu) \cdot \log R \sim R^{1/2} \cdot \log R \end{displaymath}
for arbitrarily large $R \geq 1$. Note that this also gives a lower bound for the best constant $B$ in the inequality \eqref{multiplier}, for all $1/2$-Ahlfors-David regular measures $\mu$ supported on the unit circle. 

\section{The cases $s \neq \tfrac{1}{2}$}\label{sNeqHalf}

The constants $M_{R}(\mu)$ in Proposition \ref{constants} are computed as follows:
\begin{proof}[Proof of Proposition \ref{constants}] Fix $R^{-1} \leq \alpha \leq R^{-2/3}$. Then, it is possible to choose a rectangle $T(\alpha,\alpha^{2}R)$ so that $T(\alpha,\alpha^{2}R)$ contains an arc $J \subset S^{1}$ of length $\ell(J) \sim \alpha^{2}R$ (to see this, consider first the "hardest" case $\alpha = R^{-2/3}$). It follows that
\begin{displaymath} \frac{\mu(T(\alpha,\alpha^{2}R))}{\alpha} \gtrsim \frac{\alpha^{2s}R^{s}}{\alpha} = \alpha^{2s - 1}R^{s}. \end{displaymath}
Now, depending on whether $s \leq 1/2$ or $s \geq 1/2$, the expression above is maximised by choosing either $\alpha = R^{-1}$ or $\alpha = R^{-2/3}$ -- and, of course, the expression is independent of $\alpha$ when $s = 1/2$. These choices give the lower bounds in Proposition \ref{constants}. The upper bounds are obtained by observing that $T(\alpha,\alpha^{2}R)$ is always contained in a ball of radius $\sim \alpha^{2}R$.
\end{proof}

Next, I sketch the proof of the upper bounds in Theorem \ref{main}:

\begin{proof}[Proof of the upper bounds in Theorem \ref{main}] The proof of \eqref{bcsv} in \cite{BCSV} is based on the following representation of the extension operator. Suppose that $g \in L^{1}(S^{1})$ is so smooth that
\begin{displaymath} g(e^{2\pi i \theta}) = \sum_{k \in \Z} a_{k}e^{2\pi i k \theta}, \qquad \theta \in [0,1]. \end{displaymath}
Then
\begin{equation}\label{representation} \widehat{g d\sigma}(Re^{2\pi i\theta}) = \sum_{k \in \Z} a_{k}J_{k}(R)e^{2\pi i k \theta}, \qquad \theta \in [0,1], \end{equation}
where $J_{k}$ is the $k^{th}$ Bessel function of the first kind. Next, assuming (momentarily) that the non-zero Fourier coefficients of $g$ are supported on the interval $\{R/2,\ldots,R\}$, one decomposes $g$ into $\sim \log R$ pieces $g_{p}$ such that the non-zero Fourier coefficients of $\widehat{g_{p}}(k)$ are supported on those indices $k$ with $R - k \sim 4^{p}R^{1/3}$, $1 \leq 4^{p} \lesssim R^{2/3}$; for a fixed $p$, denote the set of such indices by $A_{p}$ (as in \cite{BCSV}). At this point, the proof divides into the cases $s > 1/2$ and $s < 1/2$ (the case $s = 1/2$ being already covered in \cite{BCSV}).
\subsection{The case $s > 1/2$} Choosing a small constant $\beta > 0$ and using the linearity of the extension operator, one finds that
\begin{align} R \cdot \int_{S^{1}} |\widehat{gd\sigma}(Rx)|^{2} \, d\mu x & = R \cdot \int_{S^{1}} \left| \sum_{1 \leq 4^{p} \lesssim R^{2/3}} 2^{-\beta p}2^{\beta p} \widehat{g_{p} d\sigma}(Rx) \right|^{2} \, d\mu x \notag\\
&\label{form7} \lesssim_{\beta} \sum_{1 \leq 4^{p} \lesssim R^{2/3}} 2^{2\beta p} R \cdot \int_{S^{1}} |\widehat{g_{p}d\sigma}(Rx)|^{2} \, d\mu x. \end{align}
Next, using the representation \eqref{representation}, the integrals can be written as
\begin{equation}\label{form8} \int_{S^{1}} |\widehat{g_{p}d\sigma}(Rx)|^{2} \, d\mu x = \sum_{j,k \in A_{p}} a_{j}\overline{a_{k}}J_{j}(R)J_{k}(R)\hat{\mu}(j - k). \end{equation}
Since $|j - k| \lesssim 4^{p}R^{1/3}$ for $j,k \in A_{p}$, one then finds a smooth function $P_{p} \colon \R \to \R$ satisfying $\widehat{P_{p}}(j - k) = 1$ for all $j,k \in A_{p}$, and
\begin{displaymath} |P_{p}(t)| \lesssim_{N} \frac{4^{p}R^{1/3}}{(1 + 4^{p}R^{1/3}|t|)^{N}}, \qquad N \in \N. \end{displaymath}
Then
\begin{align*} \int_{S^{1}} |\widehat{g_{p}d\sigma}(Rx)|^{2} \, d\mu x & = \int_{S^{1}} |\widehat{g_{p}d\sigma}(Rx)|^{2} \mu \ast P_{p}(x) \, d\sigma x\\
& \leq \|\mu \ast P_{p}\|_{L^{\infty}} \int_{S^{1}} |\widehat{g_{p}d\sigma}(Rx)|^{2} \, d\sigma x \end{align*}
The first factor is bounded by
\begin{displaymath} \|\mu \ast P_{p}\|_{L^{\infty}} \lesssim 4^{p(1 - s)}R^{(1 - s)/3} \end{displaymath}
using the growth bound $\mu(B(x,r)) \lesssim r^{s}$ (or see Lemma \ref{convolutionEstimates} below for details), while for the second factor, one has (using \eqref{form8} and $\hat{\sigma}(j - k) = \delta_{j,k}$)
\begin{displaymath} \int_{S^{1}} |\widehat{g_{p}d\sigma}(Rx)|^{2} \, d\sigma x = \sum_{k \in A_{p}} |a_{k}|^{2}|J_{k}(R)|^{2} \lesssim \frac{2^{-p}}{R^{2/3}} \sum_{k \in A_{p}} |a_{k}|^{2} \leq \frac{2^{-p}}{R^{2/3}}\|g\|_{2}^{2}. \end{displaymath}
Here the uniform bound
\begin{displaymath} |J_{k}(R)| \lesssim R^{-1/2} \cdot \min\left\{k^{1/6},\left|\frac{|R| + |k|}{|R| - |k|}\right|^{1/4} \right\} \end{displaymath}
was used, see \cite[Lemma 5]{BCSV} (or use the techniques in the proof of Lemma \ref{average} below to deduce the result). So, all in all,
\begin{displaymath} R \cdot \int_{S^{1}} |\widehat{g_{p}d\sigma}(Rx)|^{2} \, d\mu x \lesssim R^{(2 - s)/3} \cdot 2^{p(1-2s)}\|g\|_{2}^{2} \sim M_{R}(\mu) \cdot 2^{p(1 - 2s)}\|g\|_{2}^{2}. \end{displaymath}
Thus, if $2\beta < 2s - 1$, one may infer the sum on line \eqref{form7} adds up to a constant times $M_{R}(\mu) \cdot \|g\|_{2}^{2}$, as desired. A similar argument takes care of functions $g$, whose Fourier support lies in $\{R,\ldots,3R/2\}$, $\{-R,\ldots,-R/2\}$ and $\{-3R/2,\ldots,-R\}$. The remaining cases, where $\spt \hat{g} \subset \{|k| > 3R/2\}$ or $\spt \hat{g} \subset \{|k| < R/2\}$, have already been dealt with in \cite[(2), Proposition 6]{BCSV}. This completes the proof in the case $s > 1/2$, because now any function $g$ can be split up into at most six pieces, each one of which has been handled separately above.

\subsection{The case $s < 1/2$} One proceeds almost as above, with the single difference that instead of the factors $2^{\beta p}$ and $2^{-\beta p}$, one introduces $2^{\beta p}/R^{\beta/3}$ and $(2^{\beta p}/R^{\beta/3})^{-1}$. Since
\begin{displaymath} \sum_{1 \leq 4^{p} \lesssim R^{2/3}} \left(\frac{2^{\beta p}}{R^{\beta/3}}\right)^{2} \lesssim_{\beta} 1, \qquad \beta > 0, \end{displaymath}
one obtains the following analogue of \eqref{form7}:
\begin{displaymath} R \cdot \int_{S^{1}} |\widehat{g d\sigma}(Rx)|^{2} \, d\mu x \lesssim_{\beta} \sum_{1 \leq 4^{p} \lesssim R^{2/3}} \left(\frac{2^{p}}{R^{1/3}} \right)^{-2\beta} R \cdot \int_{S^{1}} |\widehat{g_{p}d\sigma}(Rx)|^{2} \, d\mu x, \quad \beta > 0. \end{displaymath}
Next, the proof continues as in the case $s > 1/2$ until one has reached the estimate
\begin{displaymath} R \cdot \int_{S^{1}} |\widehat{g_{p}d\sigma}(Rx)|^{2} \, d\mu x \lesssim R^{(2 - s)/3} \cdot 2^{p(1-2s)}\|g\|_{2}^{2}. \end{displaymath}
The numbers here need to be interpreted as
\begin{displaymath} R^{(2 - s)/3} \cdot 2^{p(1-2s)} = R^{1 - s} \cdot \left(\frac{2^{p}}{R^{1/3}}\right)^{1 - 2s} \sim M_{R}(\mu) \cdot \left(\frac{2^{p}}{R^{1/3}}\right)^{1 - 2s}, \end{displaymath}
so that finally 
\begin{displaymath} R \cdot \int_{S^{1}} |\widehat{g d\sigma}(Rx)|^{2} \, d\mu x \lesssim M_{R}(\mu) \cdot \|g\|_{2}^{2} \sum_{1 \leq 4^{p} \lesssim R^{2/3}} \left(\frac{2^{p}}{R^{1/3}}\right)^{1 - 2s -2\beta} \lesssim_{\beta} M_{R}(\mu) \cdot \|g\|_{2}^{2}, \end{displaymath}
as long as $2\beta < 1 - 2s$. The rest of the proof is similar to the case $s > 1/2$.
\end{proof}

I postpone the discussion of the sharpness of the bounds until the end of the next section. 

\section{The case $s = \tfrac{1}{2}$}

In this section, $\mu$ is an $1/2$-Ahlfors-David regular probability measure on $[0,1] \cong S^{1}$, unless otherwise stated, and $R \geq 1$ is large. What follows is a construction of a function $g \in L^{2}(S^{1})$ with $\|g\|_{2} = 1$ such that for appropriately chosen radii $r \sim R$, one has
\begin{displaymath} \int_{S^{1}} |\widehat{g d\sigma}(r x)|^{2} \, d\mu x \gtrsim \frac{\log R}{R^{1/2}}\|g\|_{2}^{2}. \end{displaymath}
I recall from the previous section that if $g$ has the representation
\begin{displaymath} g(e^{2\pi i \theta}) = \sum_{k \in \Z} a_{k}e^{2\pi i k \theta}, \qquad \theta \in [0,1], \end{displaymath}
then
\begin{displaymath} \widehat{g d\sigma}(re^{2\pi i\theta}) = \sum_{k \in \Z} a_{k}J_{k}(r)e^{2\pi i k \theta}, \qquad \theta \in [0,1], \end{displaymath}
where $J_{k}$ is the $k^{th}$ Bessel function of the first kind. The construction of $g$ is based on these formulae, so one needs some understanding about the asymptotic behaviour of $J_{k}(r)$, for large $k$ and $r$. This is given by the next lemma:
\begin{lemma}\label{average} Assume that $R - k \sim 4^{p}R^{1/3}$, where $C \leq 4^{p} \leq R^{2/3}/C$ for some large enough absolute constant $C \geq 1$. Then
\begin{displaymath} A_{k}(R) := \frac{1}{R^{1/3}} \int_{R - R^{1/3}}^{R + R^{1/3}} |J_{k}(r)| \, dr \gtrsim \frac{1}{2^{p/2}R^{1/3}}. \end{displaymath}
\end{lemma}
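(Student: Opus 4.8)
The plan is to derive a precise first-order asymptotic for $J_{k}(r)$, valid \emph{uniformly} for $r$ in the interval $I := [R - R^{1/3},R + R^{1/3}]$, and then to integrate its absolute value. Write $\delta(r) := \sqrt{r^{2} - k^{2}}$. The two-sided hypothesis $C \leq 4^{p} \leq R^{2/3}/C$ forces $k$ to be a positive integer with $k \sim R$, and gives $r - k \sim 4^{p}R^{1/3}$ and $r + k \sim R$ for every $r \in I$; hence $\delta(r) \sim 2^{p}R^{2/3}$ uniformly on $I$, and the critical angle $\theta_{0} = \theta_{0}(r) := \arccos(k/r)$ satisfies $\theta_{0} \sim \sin\theta_{0} = \delta(r)/r \sim 2^{p}R^{-1/3}$.

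The central step is to apply the method of stationary phase to $J_{k}(r) = \tfrac{1}{\pi}\Rea \int_{0}^{\pi} e^{i\psi(\theta)}\,d\theta$ with $\psi(\theta) = r\sin\theta - k\theta$ (valid for $k \in \N$). On $(0,\pi)$ the phase has the single critical point $\theta_{0}$, with $\psi''(\theta_{0}) = -\delta(r)$ and $|\psi'''| \leq r$. The hypotheses are calibrated exactly so that this estimate is effective: on the neighbourhood $\{|\theta - \theta_{0}| \lesssim \theta_{0}\}$ (which comfortably contains the stationary-phase window, of width $\delta(r)^{-1/2} \ll \theta_{0}$) one has $|\psi''| \sim 2^{p}R^{2/3}$ and $|\psi'''| \lesssim R$, so the first correction to the leading term has relative size $\sim R\cdot(2^{p}R^{2/3})^{-3/2} = 4^{-3p/4}$; off that neighbourhood $|\psi'| \gtrsim 4^{p}R^{1/3}$, so the non-stationary part contributes only $O_{N}((4^{p}R^{1/3})^{-N})$; and the endpoint $\theta = 0$ is harmless because $\theta_{0}$ exceeds the window width by a factor $\sim 2^{3p/2}$. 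This yields, uniformly for $r \in I$,
\[
J_{k}(r) = A(r)\cos\phi(r) + E(r),
\]
where $A(r) := \sqrt{2/(\pi\delta(r))} \sim 2^{-p/2}R^{-1/3}$, the phase $\phi(r) := r\sin\theta_{0}(r) - k\theta_{0}(r) + c_{0}$ (with $c_{0}$ an absolute constant) satisfies, by the envelope identity $\psi'(\theta_{0}) = 0$, the uniform bound $\phi'(r) = \sin\theta_{0}(r) = \delta(r)/r \sim 2^{p}R^{-1/3} > 0$, and $|E(r)| \leq c\,4^{-3p/4}A(r)$ for an absolute constant $c$. (Alternatively, one may simply quote Debye's asymptotic expansion of $J_{k}$ together with its error term.)

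Granting this, the conclusion follows by a change of variables. As $\phi$ is strictly increasing on $I$ with $\phi' \gtrsim 2^{p}R^{-1/3}$, its total variation over $I$ is $\gtrsim 2^{p}$, which exceeds $2\pi$ once $C$ is large. Substituting $u = \phi(r)$, using $\int_{a}^{b} |\cos u|\,du \geq \tfrac{1}{\pi}(b - a)$ whenever $b - a \geq 2\pi$, and noting $A(r)/\phi'(r) = \sqrt{2/\pi}\,r\,\delta(r)^{-3/2} \sim 2^{-3p/2}$ uniformly on $I$, one gets
\[
\int_{I} A(r)|\cos\phi(r)|\,dr = \int_{\phi(R - R^{1/3})}^{\phi(R + R^{1/3})} \frac{A(r(u))}{\phi'(r(u))}\,|\cos u|\,du \gtrsim 2^{-3p/2}\cdot 2^{p} = 2^{-p/2}.
\]
For the error, $\int_{I}|E(r)|\,dr \leq c\,4^{-3p/4}\int_{I}A(r)\,dr \lesssim 4^{-3p/4}\cdot 2^{-p/2}$, since $\int_{I}A \leq |I|\sup_{I}A \lesssim 2^{-p/2}$. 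Hence
\[
\int_{R - R^{1/3}}^{R + R^{1/3}} |J_{k}(r)|\,dr \geq \int_{I}A|\cos\phi| - \int_{I}|E| \gtrsim 2^{-p/2}\big(1 - c'4^{-3p/4}\big) \gtrsim 2^{-p/2},
\]
provided $C$ is large enough that $c'4^{-3p/4} \leq c'C^{-3/4} < \tfrac{1}{2}$; dividing by $R^{1/3}$ gives $A_{k}(R) \gtrsim 2^{-p/2}R^{-1/3}$.

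The main obstacle is the second step: producing the representation $J_{k}(r) = A(r)\cos\phi(r) + E(r)$ with an error that is a genuinely small \emph{constant} multiple of the amplitude, uniformly in $r \in I$ and in the whole admissible range of $p$. This is precisely what the balance $C \leq 4^{p} \leq R^{2/3}/C$ buys: the lower bound keeps the cubic term of $\psi$ negligible and pushes the critical point far from the endpoint $\theta = 0$, while the upper bound keeps $k \sim R$, so that $r^{2} - k^{2} \sim 4^{p}R^{4/3}$ everywhere on $I$ and the amplitude $A(r)$ is honestly comparable to $2^{-p/2}R^{-1/3}$; enlarging $C$ then drives the relative error $\sim 4^{-3p/4}$ below any prescribed threshold.
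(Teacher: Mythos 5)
Your proof is correct, but it reaches the conclusion by a genuinely different route from the paper. The paper invokes Erd\'elyi's \emph{uniform} (Airy-type) asymptotic expansion $J_{k}(k\lambda) \approx (\tfrac{\lambda}{2}k^{2/3}\phi'(\lambda))^{-1/2}\Ai(-k^{2/3}\phi(\lambda))$, carefully handles the modified error term near the zeros of $\Ai$, and then, since the argument of the Airy function is large in the admissible range of $p$, re-expands $\Ai(-t)$ via $J_{\pm 1/3}$ to expose the oscillation. You instead observe that the hypothesis $4^{p} \geq C$ places $r$ far enough from the turning point $r = k$ (in the scale-invariant sense $(r^{2}-k^{2})^{3/2}/k^{2} \sim 2^{3p} \gg 1$) that the simpler Debye asymptotics suffice, and you derive these directly by stationary phase on the integral representation $J_{k}(r) = \tfrac{1}{\pi}\Rea\int_{0}^{\pi}e^{i(r\sin\theta - k\theta)}\,d\theta$. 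The two arguments then converge: your phase $\phi(r) = \sqrt{r^{2}-k^{2}} - k\arccos(k/r)$ is exactly the paper's $f_{k}(r)$, your computation $\phi'(r) = \sqrt{r^{2}-k^{2}}/r \sim 2^{p}R^{-1/3}$ is the paper's (3.8)--(3.9), and the final oscillation count (total phase variation $\sim 2^{p} \geq 2\pi$, amplitude-to-speed ratio $\sim 2^{-3p/2}$) matches the paper's $b_{k} - a_{k} \sim 2^{p}$ computation. What your route buys is self-containment and the avoidance of the detour through $\Ai$, $\Bi$ and $J_{\pm 1/3}$; what it costs is that the uniform error bound $|E(r)| \lesssim 4^{-3p/4}A(r)$, while standard (it is Debye's expansion with its classical error term, available in Watson or Olver), is only sketched and would need either the tedious van der Corput bookkeeping you outline or an explicit citation to make the lemma airtight. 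Your accounting of why the hypothesis $C \leq 4^{p} \leq R^{2/3}/C$ is exactly what makes the expansion effective is accurate and is, if anything, more transparent than the paper's treatment.
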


\begin{proof} In brief, the point here is that when $R - k \sim 4^{p}R^{1/3}$ and $r \sim R$, the function $r \mapsto J_{k}(r)$ oscillates roughly between $-2^{-p/2}R^{-1/3}$ and $2^{-p/2}R^{-1/3}$. Moreover, for $p$ large enough, the frequency of the oscillation is so high that an interval of length $\sim R^{1/3}$ contains a "peak" of $r \mapsto J_{k}(r)$.

To make the argument precise, one needs a fair understanding of the asymptotic behaviour of $J_{k}(r)$, which is fortunately contained in Erd\'elyi's treatise \cite{E}. Namely, (10) on \cite[p. 107]{E} gives the asymptotic expansion
\begin{equation}\label{form4} J_{k}(k \lambda) = \left(\frac{\lambda}{2}k^{2/3} \phi'(\lambda) \right)^{-1/2}\Ai(-k^{2/3}\phi(\lambda))[1 + O(1/k)], \end{equation}
which holds (quoting Erdelyi) \emph{uniformly in $\lambda$, $0 < \lambda < \infty$, as $k \to \infty$, $\Rea k \geq 0$, except that the error term needs some modification near zeros of $\Ai(-k^{2/3}\phi(\lambda))$.} The function $\phi$ is defined as the unique solution to the differential equation
\begin{displaymath} \phi(\lambda)(\phi'(\lambda))^{2} = 1 - \frac{1}{\lambda^{2}}, \end{displaymath}
so that $\phi' > 0$, and $\phi'$ is bounded uniformly away from zero and infinity on $[1/2,3/2]$, see \cite[p. 98]{E}. The function $\Ai$ is the \emph{Airy function}, which for non-positive arguments has the relatively simple expression
\begin{equation}\label{airy} \Ai(-t) = \frac{\sqrt{t}}{3}\left(J_{\tfrac{1}{3}}\left(\tfrac{2}{3}x^{\tfrac{3}{2}}\right) + J_{-\tfrac{1}{3}}\left(\tfrac{2}{3}x^{\tfrac{3}{2}}\right) \right), \quad t \geq 0. \end{equation}

Some inconvenience is caused by the fact that the function $\Ai(-k^{2/3}\phi(\lambda))$ has, indeed, zeroes in the region relevant to the proof, so one has to get acquainted with the meaning of the "some modification" of the error term. This modification is shown in (15) of \cite[p. 102]{E} (for the convenience of the reader interested in tracking the reference, I mention that the functions $Y_{0}$ and $Y_{2}$ in this equation are defined in (3) of \cite[p. 98]{E}, whereas $y_{0}(\lambda)$ is related to $J_{k}(k\lambda)$ on the last line of \cite[p. 106]{E}). Decoding Erd\'elyi's notation, the end result looks like
\begin{align} J_{k}(k \lambda) = & \left(\frac{\lambda}{2}k^{2/3}\phi'(\lambda) \right)^{-1/2} \Ai(-k^{2/3}\phi(\lambda))[1 + O(1/k)] \notag\\
&\label{form5} \qquad + O(1/k)\left(\frac{\lambda}{2}k^{2/3}\phi'(\lambda) \right)^{-1/2}\Bi(-k^{2/3}\phi(\lambda)). \end{align}
So, in comparison with \eqref{form4}, there is the added term on line \eqref{form5}, where, for non-positive arguments,
\begin{displaymath} \Bi(-t) = \sqrt{\frac{t}{3}}\left(J_{-\tfrac{1}{3}}\left(\tfrac{2}{3}x^{\tfrac{3}{2}}\right) - J_{\tfrac{1}{3}}\left(\tfrac{2}{3}x^{\tfrac{3}{2}}\right) \right), \quad t \geq 0. \end{displaymath}
Fortunately, for $t \geq 0$, one has, $|Ai(-t)|, |\Bi(-t)| \leq C$ for some absolute constant $C$, so one can now deduce the weaker expansion
\begin{equation}\label{form6} J_{k}(k\lambda) = \left(\frac{\lambda}{2}k^{2/3}\phi'(\lambda) \right)^{-1/2} \Ai(-k^{2/3}\phi(\lambda)) + O(k^{-4/3}), \end{equation}
valid for $\lambda \in [1/2,3/2]$ and for large enough $k \geq 0$. In particular, with $\lambda = r/k$, $r \in (R - R^{1/3},R + R^{1/3})$ and $3R/4 \leq k \leq 4R/3$, say, one has
\begin{align*} \int_{R - R^{1/3}}^{R + R^{1/3}} |J_{k}(r)| \, dr & = \int_{R - R^{1/3}}^{R + R^{1/3}} \left| J_{k} \left(k \cdot \tfrac{r}{k} \right) \right| \, dr\\
& \geq \int_{R - R^{1/3}}^{R + R^{1/3}} \left| \left(\frac{r}{4k} k^{2/3}\phi'\left(\frac{r}{k}\right)\right)^{-1/2}\Ai\left(-k^{2/3}\phi\left(\frac{r}{k}\right)\right)\right| \, dr - O(1/R)\\
& \sim \frac{1}{R^{1/3}} \int_{R - R^{1/3}}^{R + R^{1/3}} \left|\Ai\left(-k^{2/3}\phi\left(\frac{r}{k}\right) \right) \right| \, dr - O(1/R), \end{align*}
for large enough $R \geq 1$. It remains to show that
\begin{equation}\label{core} \int_{R - R^{1/3}}^{R + R^{1/3}} \left|\Ai\left(-k^{2/3}\phi\left(\frac{r}{k}\right) \right) \right| \, dr \gtrsim \frac{R^{1/3}}{2^{p/2}}, \end{equation}
assuming that $R - k \sim 4^{p}R^{1/3}$, and $3R/4 \leq k \leq R - R^{1/3}$. The latter condition ensures that $r/k \geq 1$ for all $r$ in the domain of integration, which means that equation (4) on \cite[p. 105]{E} is available: it gives that
\begin{displaymath} k \cdot \frac{2}{3}\left(\phi\left(\frac{r}{k}\right) \right)^{3/2} = \left(r^{2} - k^{2} \right)^{1/2} - k \cdot \cos^{-1} \left(\frac{k}{r}\right) =: f_{k}(r). \end{displaymath}
The derivative of $f_{k}$ is simply 
\begin{equation}\label{diff1} f_{k}'(r) = \frac{\sqrt{r^{2} - k^{2}}}{r} \sim \frac{\sqrt{r - k}}{R^{1/2}} \end{equation}
for $R/2 \leq k \leq r \leq 2R$, so that in particular
\begin{equation}\label{diff2} f_{k}'(r) \sim \frac{\sqrt{4^{p}R^{1/3}}}{R^{1/2}} = \frac{2^{p}}{R^{1/3}} \end{equation}
for $r \in (R - R^{1/3},R + R^{1/3})$ and $R - k \sim 4^{p}R^{1/3}$. Hence, by a change of variable,
\begin{align*} \int_{R - R^{1/3}}^{R + R^{1/3}} \left|\Ai\left(-k^{2/3}\phi\left(\frac{r}{k}\right) \right) \right| \, dr & = \int_{R - R^{1/3}}^{R + R^{1/3}} \left|\Ai\left(-\left(\frac{3}{2}\right)^{2/3}f_{k}(r)^{2/3} \right) \right| \, dr\\
& \gtrsim \frac{R^{1/3}}{2^{p}} \int_{R - R^{1/3}}^{R + R^{1/3}} \left|\Ai\left(-\left(\frac{3}{2}\right)^{2/3}f_{k}(r)^{2/3} \right) \right|f_{k}'(r) \, dr\\
& = \frac{R^{1/3}}{2^{p}} \int_{f_{k}(R - R^{1/3})}^{f_{k}(R + R^{1/3})} |\Ai(-cr^{2/3})| \, dr,  \end{align*}
where $c = (3/2)^{2/3}$. Write $a_{k} := f_{k}(R - R^{1/3})$ and $b_{k} := f_{k}(R + R^{1/3})$. Because $f_{k}(k) = 0$, one has (using \eqref{diff1})
\begin{align*} a_{k} & = \int_{k}^{R - R^{1/3}} f_{k}'(r) \, dr \sim \frac{1}{R^{1/2}} \int_{k}^{R - R^{1/3}} \sqrt{r - k} \, dr\\
& \sim \frac{(R - R^{1/3} - k)^{3/2}}{R^{1/2}} \sim \frac{(4^{p}R^{1/3})^{3/2}}{R^{1/2}} = 2^{3p}, \end{align*}
and $b_{k} - a_{k} \sim 2^{p}$, using \eqref{diff2}. Consequently, by definition of $\Ai$,
\begin{displaymath} \int_{a_{k}}^{b_{k}} |\Ai(-cr^{2/3})| \, dr = \int_{a_{k}}^{b_{k}} \frac{\sqrt{r^{2/3}}}{3} \left|J_{\tfrac{1}{3}}(r) + J_{-\tfrac{1}{3}}(r)\right| \, dr \sim 2^{p} \int_{a_{k}}^{b_{k}} \left|J_{\tfrac{1}{3}}(r) + J_{-\tfrac{1}{3}}(r)\right| \, dr. \end{displaymath}
Finally, one can has the following well-known asymptotic expansion for low-order Bessel functions:
\begin{displaymath} J_{\alpha}(r) = \sqrt{\frac{2}{\pi r}}\left(\cos\left(r - \frac{\alpha \pi}{2} - \frac{\pi}{4}\right) + O(1/r) \right). \end{displaymath}
In particular,
\begin{displaymath} \left|J_{\tfrac{1}{3}}(r) + J_{-\tfrac{1}{3}}(r)\right| \gtrsim \frac{1}{\sqrt{r}}\left|\cos\left(r - \frac{5\pi}{12}\right) + \cos\left(r - \frac{\pi}{12}\right)\right| - O(r^{-3/2}), \end{displaymath}
which, combined with the previous estimates, gives
\begin{align*} \int_{R - R^{1/3}}^{R + R^{1/3}} & \left|\Ai\left(-k^{2/3}\phi\left(\frac{r}{k}\right) \right) \right| \, dr\\
& \gtrsim \frac{R^{1/3}}{2^{3p/2}}\int_{a_{k}}^{b_{k}}\left|\cos\left(r - \frac{5\pi}{12}\right) + \cos\left(r - \frac{\pi}{12}\right)\right| - O(2^{-9p/4}). \end{align*}
Finally, it is clear that the last integral is $\sim b_{k} - a_{k} \sim 2^{p}$. This proves \eqref{core} and the lemma. \end{proof}

Now, the construction of the function $g$ can begin. Since the theorem claims that a suitable $g$ can be constructed for \textbf{any} $1/2$-dimensional measure $\mu$, it is natural that $g$ should somehow be derived from the measure itself. For the time being, it is convenient to think that $\mu$ is supported on $[0,1] \subset \R$ instead of $S^{1}$. Consider the following Littlewood-Paley decomposition of $\mu$. Let $\phi \colon \R \to [0,1]$ be a smooth radially decreasing function with $\phi(\xi) = 1$ for $|\xi| \leq R^{1/3}/2$ and $\phi(\xi) = 0$ for $|\xi| \geq R^{1/3}$. Moreover, choose $\phi$ so that $\phi = \hat{\eta}$ for some function $\eta \colon \R \to \R$ with $\eta(t) \sim R^{1/3}$ for $|t| \leq cR^{-1/3}$, and
\begin{equation}\label{form1} \eta(t) \lesssim_{N} R^{1/3} \sum_{k = 1}^{\infty} \frac{\chi_{\{| \cdot | \leq 2^{k}R^{-1/3}\}}}{2^{kN}}, \qquad N \in \N. \end{equation}
As usual, define
\begin{displaymath} \eta_{4^{-p}}(t) := 4^{p}\eta(4^{p}t). \end{displaymath}
Here are some standard estimates for $\mu \ast \eta_{4^{-p}}$:
\begin{lemma}\label{convolutionEstimates} Suppose that $\mu$ is an $s$-Ahlfors-David regular measure on $[0,1]$. Then
\begin{displaymath} \mu \ast \eta_{4^{-p}}(t) \gtrsim R^{(1-s)/3}4^{p(1-s)}, \qquad t \in \spt \mu, \end{displaymath}
and
\begin{displaymath} \mu \ast \eta_{4^{-p}}(t) \lesssim R^{(1 - s)/3}4^{p(1 - s)}, \qquad t \in \R. \end{displaymath}
\end{lemma}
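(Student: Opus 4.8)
The plan is to regard $\eta_{4^{-p}}$ as an $L^{1}$-normalised approximate identity at scale $\delta := 4^{-p}R^{-1/3}$. Indeed $\int \eta_{4^{-p}} = \widehat{\eta_{4^{-p}}}(0) = \phi(0) = 1$, the function is comparable to $\delta^{-1} = 4^{p}R^{1/3}$ and \emph{positive} on the core ball $B(0,c\delta)$, and, after rescaling \eqref{form1}, one has $|\eta_{4^{-p}}(t)| \lesssim_{N} 4^{p}R^{1/3}(1 + |t|/\delta)^{-N}$ for every $N \in \N$. Both assertions of the lemma then reduce to comparing $\mu \ast \eta_{4^{-p}}(t) = \int \eta_{4^{-p}}(t-y)\,d\mu(y)$ with the Ahlfors--David estimates $\mu(B(x,r)) \sim r^{s}$.

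For the upper bound (valid for all $t \in \R$) I would estimate $|\mu \ast \eta_{4^{-p}}(t)| \le \int |\eta_{4^{-p}}(t-y)|\,d\mu(y)$, split the $y$-domain into the core $\{|t-y| \le \delta\}$ and the dyadic annuli $\{2^{k}\delta < |t-y| \le 2^{k+1}\delta\}$, $k \ge 0$, and apply the rescaled \eqref{form1} on each piece to get $|\mu \ast \eta_{4^{-p}}(t)| \lesssim_{N} 4^{p}R^{1/3}\sum_{k \ge 0}2^{-kN}\mu(B(t,2^{k+1}\delta))$. The growth bound $\mu(B(x,r)) \lesssim r^{s}$ holds for \emph{every} $x \in \R$ and $r > 0$ --- if $B(x,r) \cap \spt\mu = \emptyset$ the measure vanishes, otherwise re-centre at a point of $\spt\mu$ and double $r$, and for $r > \diam\spt\mu$ use $\mu(\R) \lesssim 1 \lesssim r^{s}$ --- so each summand is $\lesssim (2^{k}\delta)^{s}$; taking any $N > s$ (say $N = 2$) makes the geometric series converge and yields $\lesssim 4^{p}R^{1/3}\delta^{s} = R^{(1-s)/3}4^{p(1-s)}$.

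For the lower bound ($t \in \spt\mu$) the contribution of the core ball, where $\eta_{4^{-p}} \gtrsim 4^{p}R^{1/3} > 0$, is already of the right size: by the lower Ahlfors--David bound it is $\gtrsim 4^{p}R^{1/3}\mu(B(t,c\delta)) \gtrsim 4^{p}R^{1/3}(c\delta)^{s} \sim R^{(1-s)/3}4^{p(1-s)}$ (legitimate since $c\delta \le cR^{-1/3} \le 1$ for $R$ large). What requires care is that the oscillatory tail $\int_{|t-y| > c\delta}\eta_{4^{-p}}(t-y)\,d\mu(y)$ cannot cancel this main term, something the mere size bound from \eqref{form1} does not by itself settle. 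Here I would use that the symbol $\phi$ is \emph{nonnegative and radially decreasing}, which forces every truncated integral $\int_{0}^{r}\eta_{4^{-p}} \ge 0$ (via the classical positivity of $\int_{0}^{T}\tfrac{\sin x}{x}\,dx$). Writing $\mu \ast \eta_{4^{-p}}(t) = \int_{0}^{\infty}\eta_{4^{-p}}(r)\,dF(r)$ with $F(r) := \mu(B(t,r))$, integrating by parts --- after a routine regularisation of $\mu$ to make $F$ absolutely continuous --- and using $F(r) \sim r^{s}$, one is led to an expression in which the nonnegative function $r \mapsto \int_{0}^{r}\eta_{4^{-p}}$ is integrated against the density of $\mu$ (respectively against $r^{s-1}$); one checks the result is a \emph{positive} quantity comparable to $\delta^{s-1} = R^{(1-s)/3}4^{p(1-s)}$ when $0 < s < 1$, the positivity being exactly what the truncation inequality $\int_{0}^{r}\eta_{4^{-p}} \ge 0$ supplies at each scale. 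The endpoints $s = 1$ (Lebesgue measure on the circle) and $s = 0$ (essentially an atom) are immediate by direct computation.

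I expect the only genuine obstacle to be this last point --- controlling the sign of the oscillatory tail of $\eta_{4^{-p}}$ in the lower bound, which is where the nonnegativity and radial monotonicity of $\phi$, as opposed to a mere size estimate on $\eta_{4^{-p}}$, must enter, together with the regularisation of $\mu$. Everything else is standard bookkeeping with dyadic annuli and the relations $\mu(B(x,r)) \sim r^{s}$.
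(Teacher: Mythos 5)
Your upper bound is exactly the paper's argument: dyadic annuli, the decay \eqref{form1} with $N=2$, and the observation that $\mu(B(x,r)) \lesssim r^{s}$ persists for arbitrary centres $x \in \R$; this part is correct. The core contribution in your lower bound ($\eta_{4^{-p}} \gtrsim 4^{p}R^{1/3}$ on $B(0,cR^{-1/3}4^{-p})$ together with the lower Ahlfors--David bound) is also precisely what the paper does --- and, notably, it is \emph{all} the paper does for the lower bound: the signed tail is not addressed there at all. So your instinct that the tail needs an argument is sound (indeed $\eta$ must change sign, since a nonnegative $\eta$ cannot have $\widehat{\eta} \equiv 1$ on a nondegenerate interval), but you have gone beyond the paper's proof, and the mechanism you propose does not close the gap.

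Concretely: the positivity of the truncated integrals $r \mapsto \int_{0}^{r}\eta_{4^{-p}}$ (which does follow from $\phi$ being even, nonnegative and radially decreasing, via the layer-cake decomposition $\phi = \int_{0}^{\infty}\chi_{[-u,u]}\,(-\phi'(u))\,du$ and $\int_{0}^{T}\tfrac{\sin x}{x}\,dx \geq 0$) yields $\int \eta_{4^{-p}}(|t-y|)\,w(|t-y|)\,dy \geq 0$ only when tested against a \emph{radially decreasing} weight $w$. The radial distribution $F(r) = \mu(B(t,r))$ of an $s$-Ahlfors--David regular measure with $s<1$ is not of this form: $F(r) \sim r^{s}$ constrains $F$ only at the level of whole scales, and within each dyadic annulus the mass of $\mu$ is free to sit exactly on the set where $\eta_{4^{-p}} < 0$. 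No regularisation of $\mu$ produces a density comparable to $r^{s-1}$ pointwise (the measure is singular), and your integration by parts therefore lands on terms ($F''$, or $\int (\int_{0}^{r}\eta_{4^{-p}})\,dF'$) that carry no sign. Quantitatively, the guaranteed positive core is of size $c\,C_{0}^{-1}\delta^{s-1}$ ($C_{0}$ the regularity constant, $\delta = 4^{-p}R^{-1/3}$), while the annuli at radii $\sim 2^{k}\delta$ for bounded $k$ can contribute a \emph{negative} amount as large as $C\,C_{0}\,2^{ks}\delta^{s-1}$; only the far tail $2^{k} \geq A$ is genuinely negligible (it is $O_{N}(A^{s-N})\delta^{s-1}$). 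So the intermediate range is the real obstruction, it is not settled by "standard bookkeeping", and your argument for it is not yet a proof. To match what the paper actually asserts you would either have to supply a genuine control of this signed middle range, or note explicitly (as the paper implicitly does) that the lower bound is being read off from the core term alone.
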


\begin{proof} To obtain the lower bound, use 
\begin{displaymath} \mu(B(t,cR^{-1/3}4^{-p})) \gtrsim R^{-s/3}4^{-ps}, \end{displaymath}
combined with the fact that $\eta_{4^{-p}}(t) \sim R^{1/3}4^{p}$ for $|t| \leq cR^{-1/3}4^{-p}$. The upper bound follows from \eqref{form1} (with $N = 2$) and
\begin{displaymath} \mu(B(t,2^{k}R^{-1/3}4^{-p})) \lesssim 2^{ks}4^{-ps}R^{-s/3}, \end{displaymath}
which is automatically valid for all $t \in \R$ (and not just $t \in \spt \mu$). \end{proof}

The lemma has the useful corollary that for some large enough constant $K \geq 1$,
\begin{equation}\label{form2} \Delta_{p}(\mu)(t) := \mu \ast (\eta_{4^{-p - K}} - \eta_{4^{-p}})(t) \gtrsim R^{(1-s)/3}4^{p(1-s)}, \qquad t \in \spt \mu. \end{equation}
Observe that the Fourier coefficients of $\Delta_{p}(\mu)$ are supported on 
\begin{displaymath} D_{p} := D_{p}^{l} \cup D_{p}^{r} := -[cR^{1/3}4^{p},CR^{1/3}4^{p}] \cup [cR^{1/3}4^{p},CR^{1/3}4^{p}] \end{displaymath}
for some constants $c,C > 0$, depending only on $K$. The next step is to choose a collection of $\sim \log R$ disjoint sets $D_{p}$, with $CR^{1/3}4^{p} \leq R$, with $p$ so large that Lemma \ref{average} is applicable. Denote the family of these sets by $\calD$. Given $D_{p} \in \calD$, define the Fourier coefficients of $g_{r}$ on $R + D_{p}^{l}$ as follows:
\begin{displaymath} a_{k}(r) := \widehat{g_{r}}(k) := \frac{\widehat{\Delta_p(\mu)}(k - R)\sgn J_{k}(r)}{2^{p} A_{k}(R)}, \qquad k \in R + D_{p}^{l}. \end{displaymath}
Observe that the sets $R + D_{p}^{l}$ are disjoint for various $D_{p} \in \calD$, and the Fourier coefficients of $g_{r}$ are only defined "to the left from $R$" (the reason for this is that the lower bound in Lemma \ref{average} is only valid in the region $k < R$). This completes the definition of $g_{r}$, so all other Fourier coefficients are simply zero. Next, assume that $R \in \N$, and observe that
\begin{align*} \frac{1}{R^{1/3}} & \int_{R - R^{1/3}}^{R + R^{1/3}} \int_{S^{1}} |\widehat{g_{r}d\sigma}(rx)|^{2} \, d\mu x \, dr\\
& = \int_{0}^{1} \frac{1}{R^{1/3}} \int_{R - R^{1/3}}^{R + R^{1/3}} \left|\sum_{D_{p} \in \calD} \sum_{k \in R + D_{p}^{l}} a_{k}J_{k}(r)e^{2\pi i (k - R) \theta} \right|^{2} \, dr \, d\mu \theta\\
& \geq \int_{0}^{1} \left| \sum_{D_{p} \in \calD} \sum_{k \in R + D_{p}^{l}} \frac{1}{R^{1/3}} \int_{R - R^{1/3}}^{R + R^{1/3}} \frac{\widehat{\Delta_p(\mu)}(k - R)|J_{k}(r)|}{2^{p} A_{k}(R)}e^{2\pi i(k - R)\theta}\right|^{2} \, d\mu \theta\\
& = \int_{0}^{1} \left| \sum_{D_{p} \in \calD} \frac{1}{2^{p}} \sum_{k \in \calD_{p}^{l}} \widehat{\Delta_{p}(\mu)}(k)e^{2\pi i k\theta}\right|^{2} \, d\mu \theta\\
& \geq \int_{0}^{1} \left(\sum_{D_{p} \in \calD} \frac{1}{2^{p}} \Rea \sum_{k \in \calD_{p}^{l}} \widehat{\Delta_{p}(\mu)}(k)e^{2\pi i k\theta} \right)^{2} \, d\mu \theta.   \end{align*} 
Because the function $\Delta_{p}(\mu)$ is real-valued, one has
\begin{align*} \Rea \widehat{\Delta_{p}(\mu)}(k)2^{2\pi ik\theta} = \frac{\widehat{\Delta_{p}(\mu)}(k)e^{2\pi k\theta} + \widehat{\Delta_{p}(\mu)}(-k)e^{2\pi i(-k)\theta}}{2},  \end{align*}
which implies that
\begin{displaymath} \Rea \sum_{k \in D_{p}^{l}} \widehat{\Delta_{p}(\mu)}(k)e^{2\pi i k\theta} = \frac{1}{2} \sum_{k \in D_{p}} \widehat{\Delta_{p}(\mu)}(k)e^{2\pi i k\theta} = \frac{\Delta_{p}(\mu)(\theta)}{2}, \end{displaymath}
recalling that the Fourier coefficients of $\Delta_{p}(\mu)$ are supported on $D_{p}$. For every $D_{p} \in \calD$, the lower bound \eqref{form2} with $s = 1/2$ now gives
\begin{align*} \frac{1}{R^{1/3}} & \int_{R - R^{1/3}}^{R + R^{1/3}} \int_{S^{1}} |\widehat{g_{r}d\sigma}(rx)|^{2} \, d\mu x \, dr\\
& \gtrsim \int_{0}^{1} \left(\sum_{D_{p} \in \calD} \frac{1}{2^{p}} \cdot R^{1/6}2^{p} \right)^{2} \, d\mu \theta \gtrsim R^{1/3} \cdot (\log R)^{2}. \end{align*}
In particular, there exists a radius $r \in (R - R^{1/3},R + R^{1/3})$ such that
\begin{equation}\label{form3}  \int_{S^{1}} |\widehat{g_{r}d\sigma}(rx)|^{2} \, d\mu x \gtrsim R^{1/3} \cdot (\log R)^{2}. \end{equation}
It remains to give a uniform upper bound for the $L^{2}$-norms of the functions $g_{r}$. By Plancherel and Lemma \ref{average},
\begin{align*} \|g_{r}\|_{2}^{2} = \sum_{D_{p} \in \calD} \sum_{k \in R + D_{p}^{l}} \frac{|\widehat{\Delta_{p}(\mu)}(k - R)|^{2}}{4^{p}A_{k}(R)^{2}} \lesssim R^{2/3} \sum_{D_{p} \in \calD} \frac{1}{2^{p}} \sum_{k \in \calD_{p}} |\widehat{\Delta_{p}(\mu)}(k)|^{2}, \end{align*}
because $R - k \sim 4^{p}R^{1/3}$ for $k \in R + D_{p}^{l}$ and $p$ was chosen large enough to begin with. The inner sum on the right hand side is the squared $L^{2}$-norm of $\Delta_{p}(\mu) = \mu \ast (\eta_{4^{-p - K}} - \eta_{4^{-p}})$, which can be bounded by estimating separately the $L^{2}$-norms of $\mu \ast \eta_{4^{-p - K}}$ and $\mu \ast \eta_{4^{-p}}$. For instance, applying the upper bound from Lemma \ref{convolutionEstimates}, one finds that
\begin{displaymath} \|\mu \ast \eta_{4^{-p}}\|_{2}^{2} \lesssim R^{1/6} \cdot 2^{p} \int (\mu \ast \eta_{4^{-p}})(t) \, dt = R^{1/6} \cdot 2^{p}, \end{displaymath}
since $\int \eta = 1$ and $\mu$ is a probability measure. Finally,
\begin{displaymath} \|g_{r}\|_{2}^{2} \lesssim R^{2/3} \sum_{D_{p} \in \calD} R^{1/6} \sim R^{5/6} \cdot \log R, \end{displaymath} 
which in combination with \eqref{form3} shows that 
\begin{displaymath} \int_{S^{1}} |\widehat{g_{r} d\sigma}(rx)|^{2} \, d\mu x \gtrsim \frac{\log R}{R^{1/2}} \cdot R^{5/6} \cdot \log R \gtrsim \frac{\log R}{R^{1/2}} \cdot \|g_{r}\|_{2}^{2}. \end{displaymath}

\subsection{Sharpness of the bounds for $s \neq \tfrac{1}{2}$} The sharpness of the bound in the case $s > 1/2$ is easily seen using a Knapp type example; more precisely, take $g(x) = e^{2\pi a \cdot x}\chi_{J}$, where $J$ is an arc of length $\ell(J) \sim R^{-1/3}$ and $a \in \R^{2}$ will be chosen momentarily. Then $\|g\|_{2}^{2} \sim \ell(J) = R^{-1/3}$, and $|\widehat{gd\sigma}(x)| \gtrsim R^{-1/3}$ for $x \in T$, where $T$ is a rectangle with dimensions $R^{1/3} \times R^{2/3}$. Choosing $a$ appropriately, this rectangle can be placed so that the intersection $(R \cdot S^{1}) \cap T$ is an arc of length $\sim R^{2/3}$, where $R \cdot S^{1} = \{|x| = R\}$. In particular, 
\begin{displaymath} R \cdot \int_{S^{1}} |\widehat{g d\sigma}(Rx)|^{2} \, d\mu x \gtrsim R^{1/3} \cdot \mu(\{x : Rx \in T\}) \gtrsim R^{(1 - s)/3} \sim R^{(2 - s)/3} \cdot \|g\|_{2}^{2}, \end{displaymath}
where the right hand side is $\sim M_{R}(\mu) \cdot \|g\|_{2}^{2}$ under the assumption $s > 1/2$.

In the case $s < 1/2$, the easiest way (at this point, at least) is probably to review the proof of the case $s = 1/2$. The functions $g_{r}$ can be defined exactly as before, and using \eqref{form2} yields
\begin{displaymath} \int_{S^{1}} |\widehat{g_{r}d\sigma}(rx)|^{2} \, d\mu x \gtrsim R^{4/3 - 2s} \end{displaymath}
instead of \eqref{form3}, for some $r \in (R - R^{1/3},R + R^{1/3})$. On the other hand, applying Lemma \ref{convolutionEstimates} as above yields the uniform bound $\|g_{r}\|_{2}^{2} \lesssim R^{4/3 - s}$. Then, if $s < 1/2$, Proposition \ref{constants} implies that
\begin{displaymath} R \cdot \int_{S^{1}} |\widehat{g_{r}d\sigma}(rx)|^{2} \, d\mu x \gtrsim R^{1 - s}\|g_{r}\|_{2}^{2} \sim M_{R}(\mu)\|g\|_{2}^{2}, \end{displaymath}
completing the proof of Theorem \ref{main}.

\end{document}